\newtheorem{theorem}{Theorem}[section]
\newtheorem*{theorem*}{Theorem}
\newtheorem{cor}[theorem]{Corollary}
\newtheorem{lemma}[theorem]{Lemma}
\newtheorem{prop}[theorem]{Proposition}
\theoremstyle{definition}
\newtheorem{defin}[theorem]{Definition}
\theoremstyle{remark}
\renewcommand{\phi}{\varphi}
\newcommand{\op}[1]{\mathrm{op}(#1)}
\newcommand{\capdots}{\cap\cdots\cap}
\newcommand{\cupdots}{\cup\cdots\cup}
\begin{document}
\title{A direct solution to the Generic Point Problem}
\author{Andy Zucker}
\date{}
\maketitle

\begin{abstract}
We provide a new proof of a recent theorem of Ben-Yaacov, Melleray, and Tsankov. If $G$ is a Polish group and $X$ is a minimal, metrizable $G$-flow with all orbits meager, then the universal minimal flow $M(G)$ is non-metrizable. In particular, we show that given $X$ as above, the universal highly proximal extension of $X$ is non-metrizable.
\let\thefootnote\relax\footnote{2010 Mathematics Subject Classification. Primary: 37B05; Secondary: 03E15.}
\let\thefootnote\relax\footnote{Key words and phrases. topological dynamics, Baire category}
\let\thefootnote\relax\footnote{The author was partially supported by NSF Grant no.\ DGE 1252522.}
\end{abstract}

\section{Introduction}
In this paper, we are concerned with actions of a topological group $G$ on a compact space $X$. All groups and spaces are assumed Hausdorff. A compact space $X$ equipped with a continuous $G$-action $a: G\times X\rightarrow X$ is called a \emph{$G$-flow}. The action $a$ is often suppressed in the notation, i.e.\ $gx$ is written for $a(g,x)$. A $G$-flow $X$ is called \emph{minimal} if every orbit is dense. It is a fact that every topological group $G$ admits a \emph{universal minimal flow} $M(G)$, a minimal flow which admits a $G$-map onto any other minimal flow. A \emph{$G$-map} is a continuous map respecting the $G$-action. The flow $M(G)$ is unique up to $G$-flow isomorphism. 

We can now recall the following theorem of Ben-Yaacov, Melleray, and Tsankov \cite{BYMT}.

\begin{theorem}
\label{MainTheorem}
Let $G$ be a Polish group, and let $M(G)$ be the universal minimal flow of $G$. If $M(G)$ is metrizable, then $M(G)$ has a comeager orbit. 
\end{theorem}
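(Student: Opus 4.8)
The plan is to isolate a single statement about highly proximal extensions---the one highlighted in the abstract---and reduce Theorem~\ref{MainTheorem} to it. First I would record a zero--one law: in a compact metrizable (hence Polish) minimal $G$-flow, every orbit is either meager or comeager. Indeed, an orbit $Gx$ is analytic, hence has the Baire property; if it is non-meager it is comeager in some nonempty open $U$, and, by minimality and compactness, finitely many translates $g_1U,\dots,g_nU$ cover the flow, so transporting ``comeager in $U$'' across the homeomorphisms $g_i$ shows $Gx$ is comeager. Thus, if $M(G)$ is metrizable and has no comeager orbit, then every orbit of $M(G)$ is meager, and it suffices to prove:
\begin{quote}
$(\ast)$ \ if $X$ is a metrizable minimal $G$-flow all of whose orbits are meager, then its universal highly proximal extension $\pi\colon\hat X\to X$ is non-metrizable.
\end{quote}
Granting $(\ast)$, I would apply it with $X=M(G)$: since a highly proximal extension of a minimal flow is minimal, $\hat X$ is a minimal $G$-flow, so by the universal property of $M(G)$ there is a $G$-map from $M(G)$ onto $\hat X$; but a continuous image of a compact metrizable space (inside the category of compact Hausdorff spaces) is metrizable---the pullback $C(\hat X)\hookrightarrow C(M(G))$ has separable range---so $\hat X$ would be metrizable, contradicting $(\ast)$. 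Hence $M(G)$ has a comeager orbit.

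To prove $(\ast)$ I would fix a concrete model of $\hat X$: realize it, say, as a minimal subflow of the Gleason cover of $X$ (the Stone space of the regular open algebra of $X$ with its induced $G$-action) or as a minimal subflow of the space of maximal chains of closed subsets of $X$, so that a point of $\hat X$ ``is'' a coherent shrinking family of open sets around its image, and the highly proximal map $\pi$ has the feature that any nonempty open $V\subseteq\hat X$ can be translated by some $g\in G$ so as to contain an entire fiber $\pi^{-1}(x)$. Assuming toward a contradiction that $\hat X$ is second countable, the plan is to manufacture an uncountable (in fact continuum-sized) family of pairwise disjoint nonempty open subsets of $\hat X$. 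The engine is the Effros theorem: since $Gx_0$ is meager, the action is not micro-transitive at $x_0$, i.e.\ there is a neighbourhood $W$ of $e$ with $Wx_0$ not a neighbourhood of $x_0$ in its orbit---equivalently, points $h_nx_0\to x_0$ with $h_n$ escaping every bounded neighbourhood of $e$. Translated into the model, this ``stabilizing at infinity'' is exactly what produces a nonempty open $V_0\subsetneq\hat X$ splitting $\pi^{-1}(x_0)$ into two nonempty relatively clopen pieces; I would then run a Cantor scheme indexed by $2^{<\omega}$, at each node using high proximality to relocate the current open set over (a translate of) $x_0$ and Effros non-micro-transitivity there to split it again, so that the continuum many branches give continuum many pairwise disjoint nonempty opens---contradicting second countability.

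The step I expect to be the main obstacle is the non-collapse of this recursion: I must ensure that every split yields two genuinely nonempty pieces and that distinct branches never eventually merge, and this is precisely where the hypothesis that \emph{all} orbits of $X$ are meager is indispensable rather than merely used once. The needed invariant is that the orbits (or orbit closures) governing each node remain meager---hence non-micro-transitive, hence splittable---as the construction proceeds; the delicate point is that the group elements furnished by high proximality and the ones furnished by Effros must be chosen compatibly, so that relocating an open set over a new base point does not destroy the ``room'' (the Baire-category largeness of the relevant fibers) needed for the next split. I would expect to maintain this via a careful bookkeeping of countably many ``bad parameter'' sets---each meager because a single orbit, being meager, cannot saturate a neighbourhood filter or exhaust a fiber---and to argue that a residual set of choices avoids all of them. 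Once the continuum-sized disjoint family is produced, non-metrizability of the compact flow $\hat X$ is immediate, which yields $(\ast)$ and hence Theorem~\ref{MainTheorem}; by contrast, had $X$ possessed a comeager orbit the same bookkeeping would saturate after countably many steps, consistent with $\hat X$ being metrizable in that case.
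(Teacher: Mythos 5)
Your outer reduction is sound and is essentially the paper's frame: the zero--one law for orbits, the reduction to the claim $(\ast)$ that the universal highly proximal extension $\widehat{X}$ of a metrizable minimal flow with all orbits meager is non-metrizable, and the final step that $M(G)$ maps onto the minimal flow $\widehat{X}$ while continuous images of compact metrizable spaces are metrizable. The paper realizes $\widehat{X}$ concretely as the space $S_G(X)$ of near ultrafilters on $\op{X}$, which plays the role of your ``concrete model.'' The genuine gap is in your proof of $(\ast)$, which is the entire content of the theorem. The step you yourself flag as ``the main obstacle'' is never carried out: you assert that failure of micro-transitivity at $x_0$ ``is exactly what produces'' an open set splitting the fiber $\pi^{-1}(x_0)$, and that the recursion survives via ``bookkeeping of bad parameter sets,'' but no argument is given for either claim. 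The paper replaces all of this with a precise dynamical input, Rosendal's criterion (Theorem \ref{RosendalThm}): if no orbit is comeager, there are $U\in\mathcal{N}_G$ and open $B\subseteq X$ such that every open $C\subseteq B$ contains an open $D$ with $\mathrm{int}(C\setminus UD)\neq\emptyset$. That single statement drives a completely elementary recursion ($B_{n+1}=\mathrm{int}(B_n\setminus UA_n)$) with no bookkeeping, and it is the step your plan has no substitute for.

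There is also a flaw in your endgame even granting the splitting step. A Cantor scheme produces only countably many open sets (finitely many at each level); the continuum many branches are decreasing intersections of open sets, hence not open, so they cannot contradict second countability or the countable chain condition. Moreover, mere pairwise disjointness of open sets is far too weak: any infinite metrizable flow has infinitely many pairwise disjoint nonempty opens. What is actually needed, and what the negation of Rosendal's condition delivers, is a \emph{uniformly separated} family: open sets $A_n\subseteq X$ and a single $V\in\mathcal{N}_G$ with $\{VA_n : n<\omega\}$ pairwise disjoint. That uniformity is what makes the unions $A_S=\bigcup_{n\in S}A_n$ behave independently in the near-ultrafilter space, so that a closed subspace of $S_G(X)=\widehat{X}$ maps continuously onto $\beta\omega$ (Proposition \ref{NonMetrizable}); this is where non-metrizability really comes from. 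Without producing this uniform, group-level separation --- or some substitute of comparable strength --- your construction does not yield $(\ast)$, and hence does not prove Theorem \ref{MainTheorem}.
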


The question of whether or not metrizability of $M(G)$ was enough to guarantee a comeager orbit was first asked by Angel, Kechris, and Lyons \cite{AKL}. In \cite{Z}, the current author proved Theorem \ref{MainTheorem} in the case when $G$ is the automorphism group of a first-order structure. The proof given there used topological properties of the largest $G$-ambit $S(G)$ along with combinatorial reasoning about the structures. In \cite{BYMT}, the authors also use topological properties of $S(G)$, but the combinatorics is replaced by the following theorem due to Rosendal; see \cite{BYMT} for a proof.

\begin{theorem}
\label{RosendalThm}
Let $G$ be a Polish group acting continuously on a compact metric space $X$. Assume the action is topologically transitive. Then the following are equivalent.
\begin{enumerate}
\item
$G$ has a comeager orbit.
\item
For any open $1\in V\subseteq G$ and any open $B\subseteq X$, there is open $C\subseteq B$ so that for any $D\subseteq C$, the set $C\setminus VD$ is nowhere dense.
\end{enumerate}
\end{theorem}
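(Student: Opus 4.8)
The plan is to prove the two implications separately, after the small remark that there is at most one comeager orbit (two comeager subsets of $X$ meet, while distinct orbits are disjoint), a fact used tacitly below.

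For $(1)\Rightarrow(2)$ I would invoke the Effros theorem in the generality of Polish groups: since $X$ is compact metrizable, hence Polish, and the orbit $Gx_0$ is non-meager, the orbit map $g\mapsto gx_0$ is open from $G$ onto $Gx_0$ with its subspace topology. Fix $V\ni 1$ and open $B\subseteq X$, and choose a symmetric open $V'\ni 1$ with $V'V'\subseteq V$. As $Gx_0$ is comeager it is dense, so there is $h$ with $hx_0\in B$; by the Effros theorem there is open $O\subseteq X$ with $O\cap Gx_0=V'hx_0$, and I set $C=O\cap B$. If $y=v'hx_0\in C\cap Gx_0$ with $v'\in V'$, then $hx_0=(v')^{-1}y\in V'y$, whence $C\cap Gx_0\subseteq V'hx_0\subseteq V'V'y\subseteq Vy$; so \emph{every} $y\in C\cap Gx_0$ satisfies $C\cap Gx_0\subseteq Vy$. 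Given open $\emptyset\neq D\subseteq C$, density of $Gx_0$ produces $y\in D\cap Gx_0$, and then $VD\supseteq Vy\supseteq C\cap Gx_0$, which is comeager in $C$, in particular dense; since $VD=\bigcup_{g\in V}gD$ is open, $C\setminus VD$ is relatively closed in $C$ with empty interior, hence nowhere dense. (I am reading $(2)$ with $D$ ranging over nonempty open subsets; for general $D$ the conclusion should be read as ``$VD$ is comeager in $C$'', since a single point cannot be $V$-dense in an open set when $X$ has no isolated points.)

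The real content is $(2)\Rightarrow(1)$, which I would attack by a Baire-category construction. Two standing facts: an analytic subset of $X$ that is non-meager in every nonempty open set is comeager; and, by topological transitivity, every $G$-invariant set with the Baire property is meager or comeager. Since each orbit $Gx$ is analytic, it is therefore enough to show that for each member $W$ of a fixed countable basis there is \emph{some} $x$ with $Gx\cap W$ non-meager: the $G$-invariant, Baire-property set $A_W=\{x:Gx\cap W\text{ non-meager}\}$ is then non-meager, hence comeager, so $\bigcap_W A_W$ is comeager, and any $x_*$ in it has $Gx_*$ non-meager in every nonempty open set, hence comeager. To build such an $x$ inside a given $W$: fix a decreasing symmetric neighbourhood basis $V_0\supseteq V_1\supseteq\cdots$ at $1$ with $V_{m+1}V_{m+1}\subseteq V_m$, apply $(2)$ to $V_0,W$ to obtain $C\subseteq W$ for which $V_0D=\bigcup_{v\in V_0}vD$ is open and dense in $C$ for every nonempty open $D\subseteq C$; this makes $\{z\in C:V_0z\text{ dense in }C\}$ a countable intersection of open dense subsets of $C$, hence comeager in $C$, and similarly inside every open subset of $C$. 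Then I would run a fusion: using $(2)$ at the successive scales $V_m$, construct a decreasing sequence of nonempty open sets $C=C_0\supseteq\overline{C_1}\supseteq C_1\supseteq\cdots$ with $\mathrm{diam}\,C_m\to 0$, at each stage choosing a fresh open ``target'' inside $C$, further shrinking so that for suitable group elements the target gets covered by $V_m$-translates of $gx$ where $x:=\bigcap_m C_m$, and diagonalizing to keep $x$ outside the countably many resulting meager sets, so that in the end $Gx\cap C$ contains a set comeager in a nonempty open subset of $C$ and hence $Gx\cap W\supseteq Gx\cap C$ is non-meager.

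The main obstacle is exactly this last point: $(2)$ only supplies \emph{density} of $V$-translates of small open sets, and promoting that to genuine non-meagerness of a single orbit inside $C$ is where the fusion must be organized with care — morally, one is describing a winning strategy for Player II in a Banach--Mazur-type game that drives the generic point of $C$ into one orbit, and this is the only step that uses $(2)$ for all neighbourhoods $V$ at once. The Effros input, the reduction through topological ergodicity, and the routine bookkeeping in the fusion are all standard.
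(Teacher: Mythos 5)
First, note that the paper itself does not prove Theorem \ref{RosendalThm}; it quotes it and refers to \cite{BYMT} for the proof, so your attempt has to be measured against that standard argument. Your direction $(1)\Rightarrow(2)$ is essentially correct and is the same Effros-based argument used there (and your caveat about reading $D$ as nonempty open is the right reading of the statement). The problem is $(2)\Rightarrow(1)$: what you give is a plan, not a proof, and the step you yourself flag as ``the main obstacle'' is precisely the entire content of the implication. The reduction through the topological zero--one law and analyticity of orbits is fine, and your observation that $(2)$ makes $\{z\in C: V_0z \text{ dense in } C\}$ comeager in $C$ is correct; but density of $Vz$ in $C$ is far weaker than non-meagerness of $Gz$ --- topological transitivity alone already gives a comeager set of points with dense orbit, with no comeager orbit in sight --- so nothing in what you have written bridges that gap. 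The fusion as sketched cannot do it: a set of the form $V_m gx$ is a continuous image of $V_m$, in general meager, so no choice of group elements makes ``the target get covered by $V_m$-translates of $gx$,'' and the diagonalization therefore has no mechanism for producing a point whose orbit meets $C$ in a non-meager set.

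The missing idea is the Effros--Hjorth-type local characterization of non-meager orbits: for a Polish group $G$ acting continuously on a Polish space $X$, the orbit $Gx$ is non-meager if and only if $\overline{Vx}$ is a neighborhood of $x$ for every open $V\ni 1$ (equivalently, $x\in\mathrm{int}\,\overline{Vx}$ for all such $V$). This is the lemma that converts the kind of density information condition $(2)$ provides into non-meagerness of a single orbit, and it is exactly how the proof in \cite{BYMT} proceeds: one uses $(2)$, for each $V$ in a countable neighborhood basis, to show that $\{x: x\in \mathrm{int}\,\overline{Vx}\}$ is comeager, intersects over the basis, applies the characterization to get a point with non-meager orbit, and finishes with the zero--one law. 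Without invoking (or reproving) that characterization, your argument is incomplete at the decisive step; with it, the ``fusion'' becomes unnecessary.
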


It is proven in \cite{AKL} that comeager orbits push forward; namely, if $X$ is a minimal $G$-flow, $x\in X$ is a point whose orbit is generic, and if $\pi: X\rightarrow Y$ is a surjective $G$-map, then $\pi(x)$ has generic orbit in $Y$. Theorem \ref{MainTheorem} then becomes equivalent to the following: whenever $G$ is a Polish group and $X$ is a minimal metrizable flow with all orbits meager, then $G$ must admit some minimal, non-metrizable flow. Remarkably, neither \cite{BYMT} nor \cite{Z} prove Theorem \ref{MainTheorem} in this direct fashion.

We provide a direct proof of Theorem \ref{MainTheorem}. For any topological group $G$ and any $G$-flow $X$, we construct a new $G$-flow denoted $S_G(X)$. We then show that if $X$ is minimal, then so is $S_G(X)$. Lastly, if $G$ is Polish and $X$ is metrizable and has all orbits meager, we use Theorem \ref{RosendalThm} to show that $S_G(X)$ is non-metrizable.

After providing our new proof of \ref{MainTheorem}, we investigate the flow $S_G(X)$ in more detail. For any $G$-flow $X$, there is a natural map $\pi_X: S_G(X)\rightarrow X$. When $X$ is minimal, we show that $\pi_X$ is the \emph{universal highly proximal extension} of $X$. The notion of a highly proximal extension was introduced by Auslander and Glasner in \cite{AG}. If $X$ and $Y$ are minimal $G$-flows, a $G$-map $\phi: Y\rightarrow X$ is \emph{highly proximal} if for any $x\in X$ and non-empty open $U\subseteq Y$, there is $g\in G$ with $g\pi^{-1}(\{x\})\subseteq U$. Auslander and Glasner prove in \cite{AG} that for every minimal $G$-flow $X$, there is a \emph{universal highly proximal extension} $\pi: \widehat{X}\rightarrow X$. This means that $\pi$ is highly proximal, and for every other highly proximal $\phi: Y\rightarrow X$, there is a $G$-map $\psi: \widehat{X}\rightarrow Y$ so that $\pi = \phi\circ \psi$. The map $\pi$ is unique up to $G$-flow isomorphism over $X$.  Our construction of the flow $S_G(X)$ provides a new construction of the universal highly proximal extension of $X$ and hints at a generalization of this notion even when $X$ is not minimal.

\subsection{Acknowledgments} 

I would like to thank Eli Glasner for many helpful discussions, including the initial suggestion that $\pi_X$ was the universal highly proximal extension of $X$. I would also like to thank Todor Tsankov for helpful discussions, and I would like to thank the Casa Matem\'atica Oaxaca for their hospitality while some of this work was being completed.

\section{The flow $S_G(X)$ and proof of Theorem \ref{MainTheorem}}

All groups and spaces will be assumed Hausdorff. In this section, fix a topological group $G$ and a $G$-flow $X$. Write $\mathcal{N}_G$ for the collection of symmetric open neighborhoods of the identity in $G$, and write $\op{X}$ for the collection of nonempty open subsets of $X$.

\begin{defin}
A \emph{near filter} is any $\mathcal{F}\subseteq \op{X}$ so that for any $A_1,...,A_k\in \mathcal{F}$ and any $U\in \mathcal{N}_G$, we have $UA_1\capdots UA_k\neq\emptyset$. A \emph{near ultrafilter} is a maximal near filter.
\end{defin}

Near ultrafilters exist by an application of Zorn's lemma. Near ultrafilters on a uniform space have been considered in \cite{AK} and \cite{B}. Two aspects of our approach are slightly different. First, the notion of nearness is not given by the natural uniform structure on the compact Hausdorff space $X$. Second, instead of working with a notion of nearness on $\mathcal{P}(X)$, we are more or less working with the regular open algebra on $X$ (see item (2) in Lemma \ref{BasicProps}). 

 Let $S_G(X)$ denote the space of near ultrafilters on $\op{X}$.
 
 \newpage

\begin{lemma}\mbox{}
\label{BasicProps}
\begin{enumerate}
\item
Let $p\in S_G(X)$, and let $A\subseteq X$ be open. If $A\not\in p$, then there is some $V\in \mathcal{N}_G$ with $VA\not\in p$.
\item
Let $A\subseteq X$ be open, and let $B_1,...,B_k\subseteq A$ be open with $B_1\cupdots B_k$ dense in $A$. If $p\in S_G(X)$ and $A\in p$, then $B_i\in p$ for some $i\leq k$.
\end{enumerate}
\end{lemma}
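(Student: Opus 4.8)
The plan is to first distill from maximality a usable membership criterion, and then apply it twice. The criterion: for $p\in S_G(X)$ and nonempty open $A\subseteq X$, we have $A\notin p$ if and only if there are a finite nonempty $\mathcal{G}\subseteq p$ and $U\in\mathcal{N}_G$ with $UA\cap\bigcap_{C\in\mathcal{G}}UC=\emptyset$. The ``if'' direction is immediate from the definition of a near filter. For ``only if'': since $A\notin p$ and $p$ is a \emph{maximal} near filter, $p\cup\{A\}$ (which strictly contains $p$ and still lies in $\op{X}$) is not a near filter, so some finite subfamily of $p\cup\{A\}$ together with some $U\in\mathcal{N}_G$ violates the defining inequality; since $p$ itself is a near filter this subfamily must contain $A$, and the remaining members $\mathcal{G}\subseteq p$ are nonempty (otherwise $UA=\emptyset$, impossible as $1\in U$). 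I would state this once and reuse it.

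For (1): given $A\notin p$, fix a witnessing finite $\mathcal{G}\subseteq p$ and $U\in\mathcal{N}_G$ as above, and choose (by continuity of multiplication) a symmetric $V\in\mathcal{N}_G$ with $VV\subseteq U$. Then $VA\supseteq A$ is a nonempty open set, and I claim $VA\notin p$: if $VA\in p$, then applying the near filter condition to $\{VA\}\cup\mathcal{G}\subseteq p$ with the neighborhood $V$ would give a point in $V(VA)\cap\bigcap_{C\in\mathcal{G}}VC\subseteq UA\cap\bigcap_{C\in\mathcal{G}}UC$, contradicting the choice of $U$ and $\mathcal{G}$.

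For (2): argue by contraposition, so assume $B_i\notin p$ for all $i\le k$, and aim for $A\notin p$. First discard any empty $B_i$ (these lie in no near ultrafilter and do not affect $\bigcup_iB_i$); they cannot all be empty, since $A\in p$ forces $A\neq\emptyset$ while $\emptyset$ is not dense in $A$. For each remaining $i$ apply the criterion to get a finite $\mathcal{G}_i\subseteq p$ and $U_i\in\mathcal{N}_G$ with $U_iB_i\cap\bigcap_{C\in\mathcal{G}_i}U_iC=\emptyset$; set $\mathcal{G}=\mathcal{G}_1\cupdots\mathcal{G}_k\subseteq p$ and $U=U_1\capdots U_k\in\mathcal{N}_G$. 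Then $UB_i\cap\bigcap_{C\in\mathcal{G}}UC=\emptyset$ for every $i$, hence $U\bigl(\bigcup_iB_i\bigr)\cap\bigcap_{C\in\mathcal{G}}UC=\emptyset$. Writing $B=\bigcup_iB_i$ and $O=\bigcap_{C\in\mathcal{G}}UC$: the set $O$ is open and disjoint from $UB$, hence disjoint from $\overline{UB}$; and since $B$ is dense in $A$ we get $gA\subseteq\overline{gB}$ for every $g\in G$, so $UA\subseteq\overline{UB}$. Therefore $UA\cap O=\emptyset$, and the criterion yields $A\notin p$.

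I expect no serious obstacle: the real content is entirely in the first paragraph, i.e.\ converting maximality of the near ultrafilter into the explicit finite witness; after that, (1) is a neighborhood-shrinking trick and (2) is a finite-union plus density argument. The only genuinely topological (as opposed to purely formal) step is the claim in (2) that an open set disjoint from $UB$ is disjoint from $\overline{UB}$, together with $UA\subseteq\overline{UB}$ — both routine. The main thing to watch is bookkeeping: at each stage one must check that the set handed to $p$ (namely $VA$, the $UB_i$, and $O$) is honestly a nonempty open subset of $X$, so that the near filter inequalities may be applied to it.
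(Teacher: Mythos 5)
Your proposal is correct and takes essentially the same route as the paper: part (1) is the identical $VV\subseteq U$ shrinking argument (the paper simply leaves the maximality-derived finite-witness criterion implicit), and part (2) gathers the same finite witnesses with a common $U\in\mathcal{N}_G$ and exploits density of $B_1\cupdots B_k$ in $A$. The only cosmetic difference is that you conclude via the set-level inclusion $UA\subseteq\overline{U(B_1\cupdots B_k)}$, where the paper instead picks a point $ga\in UA\cap C$ and moves a nearby point of some $B_i$ into the same open set.
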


\begin{proof}\mbox{}
\begin{enumerate}
\item
As $A\not\in p$, find $B_1,...,B_n\in p$ and $U\in \mathcal{N}_G$ with $UA\cap UB_1\capdots UB_n = \emptyset$. Let $V\in \mathcal{N}_G$ with $VV\subseteq U$. Then $V(VA)\cap VB_1\capdots VB_n = \emptyset$.

\item
Towards a contradiction, assume $B_i\not\in p$ for each $i\leq k$. For each $i\leq k$, find $B^{i}_1,...,B^{i}_{n_i}\in p$ and a $U\in \mathcal{N}_G$ so that $UB_i\cap UB^i_1\capdots UB^i_{n_i} = \emptyset$. We can take the same $U\in \mathcal{N}_G$ for each $i\leq k$ by intersecting. Let $C = \bigcap_{i\leq k} \bigcap_{j\leq n_i} UB^i_j$. Then since $A\in p$, we have $UA\cap C \neq \emptyset$. Let $ga\in UA\cap C$, where $g\in U$ and $a\in A$. Since $UA\cap C$ is open, there is open $A'\subseteq A$ with $gA'\subseteq UA\cap C$. As $B_1\cupdots B_k$ is dense in $A$, there is some $i\leq k$ and some $b\in B_i$ with $gb\in UA\cap C$. Since $gb\in UB_i$, this is a contradiction. \qedhere
\end{enumerate}
\end{proof}

\begin{defin}
\label{TopDef}
If $A\in \op{X}$, set $N_A := \{p\in S_G(X): A\not\in p\}$. We endow $S_G(X)$ with the topology whose typical basic open neighborhood is $N_A$ for $A\in \op{X}$.
\end{defin}

\begin{prop}
\label{Topology}
The topology from Definition \ref{TopDef} is compact Hausdorff.
\end{prop}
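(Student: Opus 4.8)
The plan is to verify the two properties separately, establishing first that $\{N_A : A \in \op{X}\}$ really forms a basis for a topology (so that Definition \ref{TopDef} makes sense), then Hausdorffness, then compactness.

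For the basis condition, I would first observe that $N_A \cap N_B = N_{A \cup B}$: indeed $A \cup B \notin p$ forces both $A \notin p$ and $B \notin p$ (any near filter containing $A$ contains $A \cup B$, since enlarging a set preserves the nonempty-intersection-of-thickenings condition — if $A \in p$ and $A \subseteq A'$ open, then $\{A'\} \cup p$ is still a near filter, so by maximality $A' \in p$), and conversely if $A \notin p$ and $B \notin p$ then by Lemma \ref{BasicProps}(2) applied to $A \cup B$ with the cover $\{A, B\}$... wait, I need $A \cup B \notin p$, so contrapositive: if $A \cup B \in p$ then $A \in p$ or $B \in p$, which is exactly Lemma \ref{BasicProps}(2) with $k = 2$, $B_1 = A$, $B_2 = B$ (they cover $A \cup B$, trivially densely). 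So $N_A \cap N_B = N_{A\cup B}$, and the collection is closed under finite intersections, hence a genuine basis.

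**Hausdorff.**

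Given distinct $p \neq q$ in $S_G(X)$, there is (WLOG) an open $A$ with $A \in p$, $A \notin q$. By Lemma \ref{BasicProps}(1) applied to $q$, there is $V \in \mathcal{N}_G$ with $VA \notin q$, so $q \in N_{VA}$. I claim there is a smaller open set $A_0$ with $A_0 \in p$ and $\overline{VA_0} \subseteq VA$ — pick $a \in A$ with a neighborhood $A_0 \ni a$, $A_0 \subseteq A$, whose $V$-thickening has closure inside $VA$ (using continuity of the action and that $V$ is open); actually more carefully, shrink $V$ to $W$ with $\overline{W} W \subseteq V$... Let me instead aim for disjoint basic neighborhoods directly: I want open sets $A', B'$ with $p \in N_{A'}$... no — I want $p \notin N_{A'}$ is wrong. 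Let me reconsider: I want $p \in N_{B'}$ and $q \in N_{A'}$ with $N_{A'} \cap N_{B'} = N_{A' \cup B'} = \emptyset$, i.e. $A' \cup B' \in r$ for every $r \in S_G(X)$. The cleanest sufficient condition: $A' \cup B'$ is dense in $X$, because then every near ultrafilter contains it. Wait, does every near ultrafilter contain every dense open set? If $D$ is dense open and $D \notin r$, then by Lemma \ref{BasicProps}(1) some $VD \notin r$; but $VD \supseteq D$ is still dense, hmm that doesn't immediately contradict. Actually: $X \in r$ always (taking $k=1$, $A_1 = X$: $UX \neq \emptyset$), and by Lemma \ref{BasicProps}(2) applied to $A = X$ with any finite cover by open sets whose union is dense, one member is in $r$. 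So if $D$ is dense open, $\{D\}$ covers $X$ densely, so $D \in r$. Good — so every near ultrafilter contains every dense open set. Thus for Hausdorff it suffices to find $A'$ with $q \in N_{A'}$, $B'$ with $p \in N_{B'}$, and $A' \cup B'$ dense. Take $A'$ with $A' \notin q$ and $A' \supseteq X \setminus \overline{A}$... concretely: since $A \notin q$ get via Lemma \ref{BasicProps}(1) some $VA \notin q$; set $A' = VA$ and $B' = X \setminus \overline{VA}$. Then $B'$ is open, $A' \cup B' = VA \cup (X \setminus \overline{VA})$ is dense, and I need $B' \in p$, equivalently $p \in N_{A}$ is false and... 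Hmm, I need $p \notin N_{B'}$, i.e. $B' \in p$ — but why? Because $A \in p$ and $A \subseteq VA$, so $VA \in p$, so $VA \notin q$ gives $q \in N_{VA}$ while $p \notin N_{VA}$; and separately $B' = X \setminus \overline{VA}$: is $B' \in p$? Not necessarily. So this approach needs the symmetric move. Let me restate: I will show that if $A \in p$, then there is open $A^* \subseteq A$, $A^* \in p$, such that $X \setminus \overline{A^*} \supseteq$ a set not in... The robust fact is: for $r \in S_G(X)$ and open $A$, exactly one of "$A \in r$", "$X \setminus \overline{A} $ contains an open set, one of whose thickenings is in $r$" — this is getting complicated; I'll just push through using regularity of $X$ and Lemma \ref{BasicProps}(1) to separate, which is the routine part.

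**Compactness.**

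For compactness I would show that the complement-closed-sets have the finite intersection property argument, or more directly: the map sending $S_G(X)$ into $\prod_{A \in \op{X}} \{0,1\}$ (Cantor cube), $p \mapsto (\mathbf{1}[A \notin p])_A$, is a homeomorphism onto a closed subspace. Injectivity is clear; that it is a homeomorphism onto its image follows since the basic opens $N_A$ are exactly the preimages of subbasic clopens. The image is the set of $\{0,1\}$-valued functions $x$ on $\op{X}$ satisfying the closed conditions encoding "near ultrafilter": for the near filter condition, for each finite $A_1,\dots,A_k$ and each $U \in \mathcal{N}_G$ with $UA_1 \cap \dots \cap UA_k = \emptyset$, at least one $x(A_i) = 1$ — this is a clopen (finite) condition; and for maximality, whenever adding $A$ keeps it a near filter then $x(A) = 0$ — "$p \cup \{A\}$ is a near filter" is itself an intersection of the finite conditions above, so maximality is a conjunction of implications among closed conditions, hence closed. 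Thus the image is closed in the compact Cantor cube, giving compactness.

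**Main obstacle.**

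The step I expect to be fiddliest is the Hausdorff separation: turning "$A \notin q$, $A \in p$" into two genuinely disjoint basic opens $N_{A'} \ni q$, $N_{B'} \ni p$. The mechanism is Lemma \ref{BasicProps}(1) (to replace $A$ by some thickening $VA \notin q$, giving breathing room) together with regularity of $X$ (to interpose $\overline{A'} \subseteq B$-type sets) and the observation that $N_{A'} \cap N_{B'} = N_{A' \cup B'}$ is empty once $A' \cup B'$ is dense, since every near ultrafilter contains every dense open set. I would organize it as: find $V$ with $VA \notin q$; find open $A^* \in p$ with $\overline{A^*} \subseteq $ the $V$-thickening-relevant region; set $A' := VA$, $B' := X \setminus \overline{A^*}$, check $A^* \in p$ implies $p \notin N_{A^*}$ but we need $p \in N_{B'}$... so actually I'd set $B' = X \setminus \overline{VA}$ only after arranging (again via Lemma \ref{BasicProps}(1) in the other direction, or directly via density) that $B' \in p$; failing that, the correct pair is $A' = VA$ and $B'$ any open set with $B' \in p$ and $B' \cap VA'' = \emptyset$ for a suitable sub-thickening — the details amount to a standard regular-open-algebra computation which I will carry out carefully but which contains no surprises. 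Once Hausdorff is in hand, compactness via the closed-subspace-of-a-cube argument is clean, and the basis verification via $N_A \cap N_B = N_{A \cup B}$ (a direct consequence of Lemma \ref{BasicProps}(2)) is immediate.
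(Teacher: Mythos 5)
Your preliminary observations are sound: the identity $N_A\cap N_B=N_{A\cup B}$ (via upward closure of near ultrafilters and Lemma \ref{BasicProps}(2)), and the fact that every near ultrafilter contains every dense open set, are both correct and useful. But the Hausdorff half is not actually proved, and the reason you stall is a direction error about membership in the basic sets. Since $N_{B'}=\{r: B'\notin r\}$, putting $p$ in $N_{B'}$ requires $B'\notin p$, not $B'\in p$; your question ``is $B'\in p$? Not necessarily'' is asking for the wrong thing. With your own choices $A'=VA$ (where $VA\notin q$ comes from Lemma \ref{BasicProps}(1)) and $B'=\mathrm{int}(X\setminus VA)$, the missing step is one line: since $A\in p$ and $B'\cap VA=\emptyset$, pick symmetric $W\in\mathcal{N}_G$ with $WW\subseteq V$; if $WA\cap WB'\neq\emptyset$ then some $b'\in WWA\subseteq VA$, contradicting $B'\cap VA=\emptyset$, so $A$ and $B'$ cannot both lie in the near filter $p$, i.e.\ $B'\notin p$ and $p\in N_{B'}$. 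Disjointness of $N_{VA}$ and $N_{B'}$ then follows from your dense-open-set observation, since $VA\cup B'$ is dense. This is exactly the paper's proof; as written you abandon it as ``a standard computation'' precisely because you were trying to verify the wrong membership.

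The compactness argument via the Cantor cube has two unjustified claims. First, continuity of $p\mapsto(\mathbf{1}[A\notin p])_{A\in \op{X}}$ requires each coordinate map to be continuous, i.e.\ each $N_A$ to be \emph{clopen}; the topology is generated only by the sets $N_A$, and nothing shows that $\{p:A\in p\}$ is open, so ``homeomorphism onto its image'' does not follow (at best one can argue the inverse map is continuous, which needs a different write-up). Second, maximality is not a closed condition as you claim: for each $A$ it reads ``if $p\cup\{A\}$ is a near filter then $x(A)=1$ fails'', and the antecedent is an intersection of clopen conditions, hence closed, so the implication is the complement of a closed set --- an \emph{open} condition --- and an intersection over all $A$ of open conditions gives no closedness of the image. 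So the image-is-closed step is unproved, and it is not obviously repairable. The paper's argument sidesteps all of this and matches your initial finite-intersection instinct: if $\{N_{A_i}:i\in I\}$ has no finite subcover, then every finite subfamily $A_{i_1},\dots,A_{i_k}$ lies in a common near ultrafilter, hence (the near-filter condition being finitary) $\{A_i:i\in I\}$ is itself a near filter; extending it by Zorn's lemma to a near ultrafilter $q$ produces a point in no $N_{A_i}$, so the family was never a cover. That three-line argument is the one to use.
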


\begin{proof}
To show that $S_G(X)$ is Hausdorff, let $p\neq q\in S_G(X)$. Find some $A\in p\setminus q$. As $A\not\in q$, find some $V\in \mathcal{N}_G$ so that $VA\not\in q$. Set $B = \mathrm{int}(X\setminus VA)$. Then $B\not\in p$. So $p\in N_B$, $q\in N_{VA}$, and $N_{VA}\cap N_B = \emptyset$.
\vspace{3 mm}

To show that $S_G(X)$ is compact, suppose $\mathcal{C}:= \{N_{A_i}: i\in I\}$ is a collection of basic open sets without a finite subcover. Then for any $i_1,...,i_k\in I$, we can find $p\in \bigcap_{j\leq k} S_G(X)\setminus N_{A_{i_j}}$, equivalently, with $A_{i_1},..., A_{i_k}\in p$. But this implies that $\{A_i: i\in I\}$ is a near filter, and can be extended to a near ultrafilter $q$. Therefore $\mathcal{C}$ is not an open cover.
\end{proof}

\begin{defin}
\label{Flow}
If $p\in S_G(X)$ and $g\in G$, we let $gp\in S_G(X)$ be defined by declaring $A\in gp$ iff $g^{-1}A\in p$ for each $A\in \op{X}$.
\end{defin}

\begin{prop}
\label{ContinuousFlow}
The action in Definition \ref{Flow} is continuous.
\end{prop}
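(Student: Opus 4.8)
The plan is to reduce the statement to showing that the preimage of each basic open set under the action map $a\colon G\times S_G(X)\to S_G(X)$ is open. Since the sets $N_A$ with $A\in\op{X}$ form a basis for $S_G(X)$, joint continuity of $a$ follows once we know each $a^{-1}(N_A)$ is open. So fix $A\in\op{X}$ and a pair $(g_0,p_0)\in a^{-1}(N_A)$, i.e.\ $g_0p_0\in N_A$, which by Definition \ref{Flow} means $g_0^{-1}A\notin p_0$ (note $g_0^{-1}A\in\op{X}$, since $x\mapsto g_0^{-1}x$ is a homeomorphism of $X$). I want to produce an open $W\ni g_0$ in $G$ and a basic open $N_B\ni p_0$ with $a(W\times N_B)\subseteq N_A$.

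Before that I would record one preliminary observation not explicitly in Lemma \ref{BasicProps}: every near ultrafilter is upward closed, i.e.\ if $p\in S_G(X)$, $C\in p$, $D\in\op{X}$ and $C\subseteq D$, then $D\in p$. This is immediate from maximality: for any $A_1,\dots,A_k\in p$ and $U\in\mathcal{N}_G$ we have $UA_1\capdots UA_k\cap UD\supseteq UA_1\capdots UA_k\cap UC\neq\emptyset$ (the right-hand side being nonempty because $A_1,\dots,A_k,C$ all lie in the near filter $p$), so $p\cup\{D\}$ is a near filter and hence equals $p$. Equivalently, if $D\notin p$ and $C\subseteq D$ then $C\notin p$.

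Now the main step. Apply Lemma \ref{BasicProps}(1) to $p_0$ and the open set $g_0^{-1}A\notin p_0$ to obtain $V\in\mathcal{N}_G$ with $Vg_0^{-1}A\notin p_0$. Set $B:=Vg_0^{-1}A$, which is a nonempty open subset of $X$ (a union of homeomorphic images of $g_0^{-1}A$), and set $W:=g_0V$, an open neighborhood of $g_0$; then $p_0\in N_B$. I claim $a(W\times N_B)\subseteq N_A$. Given $g\in W$ and $p\in N_B$, write $g=g_0v$ with $v\in V$; then $g^{-1}A=v^{-1}g_0^{-1}A\subseteq Vg_0^{-1}A=B$, using that $V$ is symmetric. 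Since $B\notin p$ and $g^{-1}A\subseteq B$, the upward-closure observation gives $g^{-1}A\notin p$, i.e.\ $A\notin gp$, i.e.\ $gp\in N_A$. This shows $a^{-1}(N_A)$ is open, completing the proof.

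I do not expect a serious obstacle here; the content is bookkeeping. The two points needing care are (i) keeping left and right translations straight, so that $g\in g_0V$ genuinely forces $g^{-1}A\subseteq Vg_0^{-1}A$ — this is exactly where symmetry of $V$ is used — and (ii) realizing that the passage from $B\notin p$ to $g^{-1}A\notin p$ requires upward closure of near ultrafilters, which is why I would prove that small fact first.
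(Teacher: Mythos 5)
Your proof is correct, and the underlying estimate is the same one the paper uses: apply Lemma \ref{BasicProps}(1) to fatten the offending open set by a symmetric $V\in\mathcal{N}_G$, observe that translates by nearby group elements land inside the fattened set, and conclude via upward closure of near ultrafilters. The packaging, however, differs. The paper argues with nets and reduces to the identity: it notes that each translation $p\mapsto gp$ is continuous and then checks that $g_i\to 1$, $p_i\to p$ imply $g_ip_i\to p$, leaving implicit both the standard reduction (joint continuity at $(g,p)$ follows from these two facts by writing $g_ip_i=g\bigl((g^{-1}g_i)p_i\bigr)$) and the upward-closure property of near ultrafilters used in the step ``$g_i^{-1}A\subseteq VA$ and $VA\notin p_i$ imply $g_i^{-1}A\notin p_i$.'' You instead verify directly that $a^{-1}(N_A)$ is open at an arbitrary point $(g_0,p_0)$, taking $W=g_0V$ and $B=Vg_0^{-1}A$, which buys a self-contained argument with no appeal to nets or to the translation-plus-continuity-at-$1$ reduction; you also isolate and prove the upward-closure fact that the paper uses tacitly. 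Both proofs are short; yours is slightly more explicit, the paper's slightly more economical.
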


\begin{proof}
First note that for a fixed $g\in G$, the map $p\rightarrow gp$ is continuous. So let $p_i, p\in S_G(X)$ and $g_i\in G$ with $p_i\rightarrow p$ and $g_i\rightarrow 1$. Suppose $A\not\in p$. Find $V\in \mathcal{N}_G$ with $VA\not\in p$. So eventually $VA\not\in p_i$. Also, as $g_i\rightarrow 1$, eventually we have $g_i^{-1}\in V$. Whenever $g_i^{-1}A\subseteq VA$, we must have $g_i^{-1}A\not\in p_i$. So eventually $A\not\in g_ip_i$. 
\end{proof}

Up until now, no assumptions on $G$ and $X$ have been needed. In fact, we did not even need $X$ to be compact to construct $S_G(X)$. We now begin adding extra assumptions to $G$ and $X$ to obtain stronger conclusions about $S_G(X)$.

\newpage

\begin{prop}
\label{Minimal}
Suppose $X$ is a minimal $G$-flow. Then so is $S_G(X)$.
\end{prop}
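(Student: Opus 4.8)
The plan is to show that for any near ultrafilter $p \in S_G(X)$ and any basic open set $N_A$ with $A \in \op{X}$, the orbit $Gp$ meets $N_A$; equivalently, there is some $g \in G$ with $A \notin gp$, i.e.\ $g^{-1}A \notin p$. Since $p$ is a near ultrafilter, $g^{-1}A \notin p$ will follow once we exhibit some $B \in p$ and some $U \in \mathcal{N}_G$ with $U(g^{-1}A) \cap UB = \emptyset$; it will be cleanest to produce a single $B \in p$ together with an open set $A'$ and a group element $g$ so that $g^{-1}A' \subseteq A$ and $A' \cap UB' = \emptyset$ for appropriate neighborhoods, using minimality of $X$ to move things around. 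So first I would fix $p$ and $A$, pick any nonempty open $A' \subseteq X$ with $\overline{VA'} \subseteq$ (something disjoint from $A$) — more precisely, choose $A'$ small enough and $V \in \mathcal{N}_G$ so that $\mathrm{int}(X \setminus VA')$ is nonempty; this is where compactness/regularity of $X$ is used.

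Next, the heart of the argument is a minimality-and-compactness covering step. Since $X$ is a minimal $G$-flow, for the nonempty open $A'$ the sets $\{gA' : g \in G\}$ cover $X$, so by compactness finitely many $g_1 A', \dots, g_n A'$ cover $X$. I would like to upgrade this: using the continuity of the action and shrinking $A'$, arrange that finitely many translates $g_1 A', \dots, g_n A'$ have the property that for every $B \in \op{X}$, some $g_j A'$ is "deep inside" a translate of $B$ in the sense needed, or dually that $\{g_j^{-1} : j \le n\}$-translates of any given member of $p$ cannot all avoid $A'$. The cleanest route: given $B \in p$, by minimality $\bigcup_{g} gB$ is all of $X$, and in fact by compactness together with a Lebesgue-number style argument (using $V$), there are finitely many $h_1, \dots, h_m$ with $X = \bigcup_i V h_i B$... but I actually want to go the other way, covering by translates of $A'$ and then using Lemma \ref{BasicProps}(2).

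Here is the version I expect to work. Fix $p$ and $A \in \op{X}$; I want $g$ with $g^{-1}A \notin p$. Pick nonempty open $A_0$ and $V \in \mathcal{N}_G$ with $VA_0 \subseteq A$ (shrink $A$ first if necessary — replace $A$ by a smaller open set, which only makes the claim $A \notin gp$ harder, so that's fine; actually I should be careful and instead just take $A_0 \subseteq A$ open with $VA_0 \subseteq A$, possible by continuity of the action at $1$). By minimality and compactness, $X = g_1 A_0 \cup \cdots \cup g_n A_0$ for some $g_1, \dots, g_n \in G$. Now suppose toward a contradiction that $g_j^{-1} A \in p$ for every $j \le n$; equivalently $A \in g_j p$ for all $j$. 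Since $g_j A_0 \subseteq g_j (V^{-1} V A_0) $... hmm, rather: from $VA_0 \subseteq A$ we get $g_j A_0 \subseteq g_j V^{-1} A \subseteq$ a $V$-fattening of $g_j A$, so the sets $g_j A$, suitably fattened, cover $X$; then applying Lemma \ref{BasicProps}(2) to the cover of $X$ by the $g_j A$'s (after checking density, which holds since they cover) gives $g_j A \in p$... but wait that's consistent, not a contradiction. The real contradiction must come from the freedom to instead choose the $A_0$-translates to \emph{avoid} a fixed member of $p$. So the correct structure is: pick $B \in p$ with $\overline{B}$ contained in $\mathrm{int}(X \setminus VA_0)$ — that is, first choose $A_0$ with $\mathrm{int}(X \setminus VA_0) \ne \emptyset$ and then note $B := \mathrm{int}(X \setminus VA_0) \notin$? no.

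Let me restate the plan at the level of strategy, since the bookkeeping above shows where the subtlety lies. The approach: to hit $N_A$, it suffices (by the covering of $X$ by finitely many translates $g_j^{-1} A$, valid by minimality + compactness) to show that $p$ cannot contain all of $g_1^{-1}A, \dots, g_n^{-1} A$; and the reason it cannot is that, choosing $A$ replaced by a small enough piece and using Lemma \ref{BasicProps}(1), any near ultrafilter containing a set $C$ must exclude some $V$-fattening of the complement of $C$, so one builds a descending chain / uses that $p$ is a proper near filter to derive that not every translate can be in $p$. The \textbf{main obstacle} I anticipate is precisely this last point: turning "the translates cover $X$" into a genuine obstruction for the near ultrafilter, which requires carefully interleaving the neighborhood $V$ from the definition of near filter with the compactness argument — i.e.\ a Lebesgue-number-type uniformity argument on the compact $G$-flow $X$ so that a single $V \in \mathcal{N}_G$ works simultaneously for all finitely many translates. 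Once that uniform $V$ is pinned down, Lemma \ref{BasicProps} (parts (1) and (2)) should close the argument: part (2) forces $p$ to contain one of the finitely many covering translates, and part (1) combined with the disjointness engineered via $V$ forces a contradiction with $p$ being a near filter, so in fact $Gp \cap N_A \ne \emptyset$, giving density of every orbit and hence minimality of $S_G(X)$.
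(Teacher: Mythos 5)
There is a genuine gap: the proposal is a plan that never closes, and the two concrete routes it sketches do not work. First, the statement you set out to prove --- that for \emph{every} $A\in\op{X}$ the orbit of $p$ meets $N_A$ --- is false as stated: if $A$ is dense (e.g.\ $A=X$), then by Lemma \ref{BasicProps}(2) every near ultrafilter contains every translate of $A$, so $N_A=\emptyset$. The hypothesis you must use is $N_A\neq\emptyset$, and its role is exactly to produce, via Lemma \ref{BasicProps}(1) and (2), a symmetric $V\in\mathcal{N}_G$ with $B:=\mathrm{int}(X\setminus VA)\neq\emptyset$. You gesture at such a set ($A'$, ``$\mathrm{int}(X\setminus VA')$ nonempty'') but then apply the minimality--compactness covering to the wrong set: covering $X$ by translates $g_jA_0$ of a subset of $A$ and assuming all $g_j^{-1}A\in p$ yields, as you yourself observe, no contradiction; and the fallback strategy ``show $p$ cannot contain all of $g_1^{-1}A,\dots,g_n^{-1}A$'' is precisely the unproved step (and is false without $N_A\neq\emptyset$). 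The missing idea is to cover $X$ by translates of the \emph{complementary} open set: write $X=\bigcup_{i\leq k}g_iB$, apply Lemma \ref{BasicProps}(2) (the $g_iB$ are open with dense union) to get $g_iB\in p$ for some $i$, hence $B\in g_i^{-1}p$; then, taking symmetric $W$ with $WW\subseteq V$, the disjointness $B\cap VA=\emptyset$ gives $WA\cap WB=\emptyset$, so the near filter $g_i^{-1}p$ cannot also contain $A$, i.e.\ $g_i^{-1}p\in N_A$.

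Note also that the ``main obstacle'' you anticipate --- a Lebesgue-number-type uniformity so that one $V$ works for all finitely many translates --- does not arise in the correct argument: the single $V$ comes once and for all from Lemma \ref{BasicProps}(1) applied to a witness of $N_A\neq\emptyset$, and by translating the near ultrafilter ($B\in g_i^{-1}p$, compare $A$ and $B$ untranslated) rather than fattening translated sets ($Ug_iB$ versus $Ug_iA$), you avoid any conjugation or uniformity issue entirely.
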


\begin{proof}
Let $p\in S_G(X)$, and let $A\in op(X)$ with $N_A\neq \emptyset$. Find some $V\in \mathcal{N}_G$ with $N_{VA}\neq\emptyset$. Then $B := \mathrm{int}(X\setminus VA)\neq \emptyset$. As $X$ is minimal, find $g_1,...,g_k$ with $X = \bigcup_{i\leq k} g_iB$. For some $i\leq k$, we must have $g_iB\in p$. Then $B\in g_i^{-1}p$, so we must have $A\not\in g_i^{-1}p$, and the orbit of $p$ is dense as desired.
\end{proof}

Before proving Theorem \ref{MainTheorem}, we need a sufficient criterion for when $S_G(X)$ is non-metrizable. 

\begin{prop}
\label{NonMetrizable}
Suppose there are $\{A_n: n< \omega\}\subseteq \op{X}$ and $V\in \mathcal{N}_G$ so that the collection $\{VA_n: n< \omega\}$ is pairwise disjoint. Then $S_G(X)$ is non-metrizable.
\end{prop}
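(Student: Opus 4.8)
The plan is to show that $S_G(X)$ contains a discrete subspace of size continuum, which is impossible in a metrizable compact space (those are second countable, hence hereditarily separable, hence every subspace is separable). Given the hypothesis, the collection $\{A_n : n < \omega\}$ behaves like an ``independent'' family of clopen-ish pieces as seen through the near-filter relation, because the $VA_n$ are pairwise disjoint: for any $S \subseteq \omega$, the family $\{A_n : n\in S\}\cup\{\mathrm{int}(X\setminus A_n) : n \notin S\}$ should generate a near filter. The first step is to verify this: given finitely many indices, an open set $U'\in\mathcal N_G$ with $U'U'\subseteq V$, and noting that $U'$-enlargements of the chosen sets have nonempty common intersection — for the indices in $S$ this is clear if the $A_n$ themselves are taken from a near ultrafilter-friendly position, and the key point is that $U'\!\mathrm{int}(X\setminus A_n)$ together with $U'A_m$ for $m \in S$ cannot be forced empty because $VA_m$ misses $VA_n \supseteq A_n$, so there is room. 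I would actually want to be a little careful and instead use that, by minimality or just by the disjointness, each $A_n$ lies in \emph{some} near ultrafilter and then build, for each $S\subseteq\omega$, a near ultrafilter $p_S$ with $A_n\in p_S$ for $n\in S$ and $A_n\notin p_S$ for $n\notin S$.

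Concretely: fix $U\in\mathcal N_G$ symmetric with $UU\subseteq V$. For $S\subseteq\omega$ let $\mathcal F_S = \{A_n : n\in S\}\cup\{B_n : n\notin S\}$ where $B_n = \mathrm{int}(X\setminus UA_n)$. I claim $\mathcal F_S$ is a near filter. Take finitely many members and any $W\in\mathcal N_G$; shrinking, assume $W\subseteq U$. The $W$-enlargements of the $A_n$'s chosen (say $n\in S_0\subseteq S$ finite) intersect because $\bigcap_{n\in S_0} WA_n \supseteq \bigcap_{n\in S_0} A_n$ — wait, that need not be nonempty. So instead I'd pick a single $a_n \in A_n$ for each $n$; but these points vary. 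The cleanest route: the $VA_n$ are pairwise disjoint \emph{open} sets, so the $A_n$ are pairwise disjoint, and for distinct $n$ we cannot put two different $A_n$ into the same filter. Hence the right statement uses \emph{at most one} index from $S$ at a time is false too. Let me instead reorganize: I will show directly that the map $S \mapsto$ (a near ultrafilter extending $\{A_{n_S}\}$ for a single chosen point, indexed more cleverly) — actually the correct and standard move is: for each $n$ pick a near ultrafilter $p_n \ni A_n$; the points $p_n$ are then pairwise in the disjoint basic sets $N_{B}$ with $B = \mathrm{int}(X\setminus VA_m)$, so $\{p_n\}$ is already an infinite discrete set. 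But infinite discrete does not contradict metrizability, so this is insufficient — I genuinely need $2^\omega$ many.

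So the real argument must exploit independence. I would fix $U$ with $UU \subseteq V$, and for each $n$ choose a point $a_n \in A_n$ together with an open neighborhood; then for $S\subseteq\omega$ consider the family $\mathcal F_S$ generated by all sets of the form $UA_n$ for $n\in S$ — no. Let me commit to the likely-intended line: the family $\{A_n : n\in S\}\cup\{\mathrm{int}(X\setminus VA_n): n\notin S\}$ \emph{is} a near filter precisely because the pairwise $V$-disjointness of the $VA_n$ means that for any $W\in\mathcal N_G$ with $W\subseteq V$ and any finite collection, one has $\bigcap_{n\in S_0}WA_n \cap \bigcap_{n\in T_0} W\,\mathrm{int}(X\setminus VA_n)$: the issue of several $A_n$ is that they are disjoint; but $WA_n$ for $W=V$ need not be disjoint from each other? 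They ARE disjoint by hypothesis. So one \emph{cannot} have $|S_0|\ge 2$. Therefore the intended family must be indexed so each near ultrafilter sees only one $A_n$ — contradicting $2^\omega$. I am clearly missing the right encoding, so the honest plan is: \textbf{the main obstacle is producing continuum-many near ultrafilters from a merely countable disjoint family.} The resolution I would pursue: pass to $S_G(X)$ and use that for each $n$, $N_{\mathrm{int}(X\setminus VA_n)}$ and $N_{\mathrm{int}(X\setminus VA_m)}$ together with disjointness of $VA_n, VA_m$ make the sets $C_n := \{p : A_n \notin p\}$ an independent family of open subsets of the compact space $S_G(X)$ (each $C_n$ and its "complement-ish" $N_{\mathrm{int}(X\setminus A_n)}$ both dense-ish), and then invoke that a compact metric space cannot carry an infinite independent family of regular open sets — equivalently, its regular open algebra is ccc and has no independent countable subset is false, but a \emph{second countable} space's clopen algebra, or more precisely: if $S_G(X)$ were metrizable it would be separable, yet an independent family of size $\omega$ of open sets with dense "boolean combinations" yields $2^\omega$ disjoint open sets after one more Stone-type step, contradicting separability (ccc). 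So the final step is: \emph{independent countable family of open sets} $\Rightarrow$ \emph{failure of ccc} $\Rightarrow$ \emph{non-separable} $\Rightarrow$ \emph{non-metrizable}. I would write the proof as: (i) check $\{C_n\}$ is independent in the regular open algebra of $S_G(X)$ using Lemma~\ref{BasicProps}(1) and the disjointness hypothesis; (ii) conclude there are $2^\omega$ pairwise disjoint nonempty open sets; (iii) therefore $S_G(X)$ is not separable, so by Proposition~\ref{Topology} (compact Hausdorff) plus Urysohn it is not metrizable. The delicate step is (i)–(ii), making the independence rigorous against the near-filter axioms.
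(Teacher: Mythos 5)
Your proposal does not reach a correct proof, and you essentially say so yourself (``I am clearly missing the right encoding''). The obstacle you identify is real: since the sets $VA_n$ are pairwise disjoint, no near ultrafilter can contain two distinct $A_n$'s, so any scheme that tries to realize an arbitrary pattern $S\subseteq\omega$ by putting $\{A_n: n\in S\}$ into a single near ultrafilter is dead on arrival, and choosing one near ultrafilter $p_n\ni A_n$ per $n$ only gives countably many points. Your fallback plan (i)--(iii) does not repair this. In (i), the family $C_n=\{p: A_n\notin p\}$ is \emph{not} independent: independence would require points lying in two complements simultaneously, i.e.\ near ultrafilters containing both $A_m$ and $A_n$, which is exactly what the hypothesis forbids. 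And even granting some form of independence, step (ii) is false as stated: a countable independent family of (reg-)open sets does not produce $2^\omega$ pairwise disjoint open sets nor violate ccc --- the Cantor set $2^\omega$ carries a countable independent family of clopen sets and is compact, metrizable, separable. So the chain ``independent family $\Rightarrow$ not ccc $\Rightarrow$ non-metrizable'' collapses.

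The missing idea, which is the paper's proof, is to index not by single $n$ but by arbitrary $S\subseteq\omega$, using \emph{unions}: set $A_S=\bigcup_{n\in S}A_n$ and $Y=\{p\in S_G(X): A_\omega\in p\}$, a closed subspace. Disjointness of the $VA_n$ gives $VA_S\cap VA_{\omega\setminus S}=\emptyset$, so no $p$ contains both $A_S$ and $A_{\omega\setminus S}$; and since $A_S\cup A_{\omega\setminus S}=A_\omega$, Lemma \ref{BasicProps}(2) forces every $p\in Y$ to contain exactly one of them. Declaring $S\in\pi(p)$ iff $A_S\in p$ defines a continuous map $\pi: Y\rightarrow\beta\omega$, and it is surjective because for any ultrafilter $q$ on $\omega$ the family $\{A_S: S\in q\}$ is a near filter (any finitely many $S_i\in q$ have a common element $n$, and $A_n$ witnesses nonemptiness of the $U$-thickened intersection), so it extends to some $p\in Y$ with $\pi(p)=q$. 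A metrizable compact space cannot map continuously onto $\beta\omega$, and metrizability is hereditary to the closed subspace $Y$, so $S_G(X)$ is non-metrizable. In short: you correctly diagnosed that single sets $A_n$ cannot encode $2^\omega$ behaviors, but the resolution is to let each subset $S$ act through the single open set $A_S$, a step your proposal never takes.
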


\begin{proof}
If $S\subseteq \omega$, let $A_S = \bigcup_{n\in S} A_n$, and let $Y = \{p\in S_G(X): A_\omega\in p\}$. Then $Y\subseteq S_G(X)$ is a closed subspace. To show that $S_G(X)$ is non-metrizable, we will exhibit a continuous surjection $\pi: Y\rightarrow \beta\omega$. First note that if $S\subseteq \omega$, then $VA_S\cap VA_{\omega\setminus S} = \emptyset$. Therefore, if $p\in Y$, $p$ contains exactly one of $A_S$ or $A_{\omega\setminus S}$ for each $S\subseteq \omega$. We let $\pi: Y\rightarrow \beta \omega$ be defined so that for $S\subseteq \omega$, $S\in \pi(p)$ iff $A_S\in p$. It is immediate that $\pi$ is continuous. To see that $\pi$ is surjective, let $q\in \beta\omega$. Then $\{A_S: S\in q\}$ is a near filter; any near ultrafilter $p$ extending it is a member of $Y$ with $\pi(p) = q$. 
\end{proof}

\begin{proof}[Proof of Theorem \ref{MainTheorem}]
We now fix a Polish group $G$ and a minimal $G$-flow $X$ whose orbits are all meager. Then by Theorem \ref{RosendalThm}, there is $U\in \mathcal{N}_G$ and open $B\subseteq X$ so that for any open $C\subseteq B$, there is open $D\subseteq C$ with $C\setminus UD$ somewhere dense (since $C$ and $UD$ are open, this is the same as $C\setminus UD$ having nonempty interior). 

Let $V\in \mathcal{N}_G$ with $VV\subseteq U$. We now produce $\{A_n: n<\omega\}\subseteq \op{X}$ with $\{VA_n: n<\omega\}$ pairwise disjoint. First set $B_0 = B$. As $B_0\subseteq B$, there is $A_0\subseteq B_0$ so that $B_0\setminus UA_0$ has nonempty interior. Suppose open sets $B_0,...,B_{n-1}$ and $A_0,...,A_{n-1}$ have been produced so that $A_i\subseteq B_i$ and $\mathrm{int}(B_i\setminus UA_i)\neq \emptyset$. We continue by setting $B_n = \mathrm{int}(B_{n-1}\setminus UA_{n-1})$. As $B_n\subseteq B$, there is $A_n\subseteq B_n$ so that $B_n\setminus UA_n$ has nonempty interior. Notice that for any $m\leq n$, we also have $A_n\subseteq B_m$. It follows that if $m < n$, we have $UA_m\cap A_n = \emptyset$. This implies that $VA_m\cap VA_n = \emptyset$ as desired. We can now apply Proposition \ref{NonMetrizable} to conclude that $S_G(X)$ is not metrizable.
\end{proof}

\section{Universal highly proximal extensions}

Let $\phi: Y\rightarrow X$ be a $G$-map between minimal flows. There are several equivalent definitions which all say that $\phi$ is \emph{highly proximal}. The definition we will use here is that $\phi$ is highly proximal iff every non-empty open $B\subseteq Y$ contains a fiber $\phi^{-1}(\{x\})$ for some $x\in X$. Define the \emph{fiber image} of $B$ to be the set $\phi_{fib}(B) := \{x\in X: \phi^{-1}(\{x\})\subseteq B\}$. Notice that $\phi_{fib}(B)$ is open, and $\phi$ is highly proximal iff $\phi_{fib}(B)\neq \emptyset$ for every non-empty open $B\subseteq Y$. It follows that this definition is the same as the one given in the introduction.

Now let $X$ be a $G$-flow, and form $S_G(X)$. We define the map $\pi_X: S_G(X)\rightarrow X$ as follows. For each $p\in S_G(X)$, there is a unique $x_p\in X$ so that every neighborhood of $x_p$ is in $p$. The existence of such a point is an easy consequence of the compactness of $X$ and the second item of \ref{BasicProps}. For uniqueness, notice that if $x\neq y\in X$, we can find open $A\ni x$, $B\ni y$ and $U\in \mathcal{N}_G$ with $UA\cap UB = \emptyset$. We set $\pi_X(p) = x_p$. This map clearly respects the $G$-action. To check continuity, one can check that if $K\subseteq X$ is closed, then $\pi_X^{-1}(K) = \{p\in S_G(X): A\in p \text{ for every open } A\supseteq K\}$, and this is a closed condition.

\begin{prop}
	\label{HP}
	Let $X$ be minimal. Then the map $\pi_X: S_G(X)\rightarrow X$ is highly proximal.
\end{prop}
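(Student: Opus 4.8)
The plan is to verify the fiber‑image criterion for high proximality directly. Since the sets $N_A$, $A\in\op{X}$, form a basis for $S_G(X)$ and since $B\mapsto (\pi_X)_{fib}(B)$ is monotone, it is enough to show that $(\pi_X)_{fib}(N_A)\neq\emptyset$ for every $A\in\op{X}$ with $N_A\neq\emptyset$. Throughout I will use the description of the fibers of $\pi_X$ recorded just above: $p\in\pi_X^{-1}(\{x\})$ iff every open neighborhood of $x$ belongs to $p$.

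So fix $A$ with $N_A\neq\emptyset$ and pick $p\in N_A$, i.e.\ $A\notin p$. By part (1) of Lemma \ref{BasicProps} there is $V\in\mathcal{N}_G$ with $VA\notin p$. Now observe that $X$ belongs to every near ultrafilter (since $UX=X$ for every $U\in\mathcal{N}_G$, adjoining $X$ never destroys the near filter property), and that the open set $VA\cup\mathrm{int}(X\setminus VA)$ is dense in $X$, being the complement of the (nowhere dense) boundary of the open set $VA$. Applying part (2) of Lemma \ref{BasicProps} with ambient set $X\in p$, we get that one of $VA$, $W:=\mathrm{int}(X\setminus VA)$ lies in $p$; since $VA\notin p$, it must be that $W\in p$. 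In particular $W\neq\emptyset$.

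To finish I claim that $\pi_X^{-1}(\{x\})\subseteq N_A$ for every $x\in W$, which gives $W\subseteq(\pi_X)_{fib}(N_A)$ and hence $(\pi_X)_{fib}(N_A)\neq\emptyset$. Choose $V'\in\mathcal{N}_G$ with $V'V'\subseteq V$. First, $V'A\cap V'W=\emptyset$: from $g_1 a=g_2 w$ with $g_1,g_2\in V'$, $a\in A$, $w\in W$ we would get $w=g_2^{-1}g_1 a\in VA$, contradicting $w\in W\subseteq X\setminus VA$. Now let $x\in W$ and let $q$ lie over $x$. Then $W$ is an open neighborhood of $x$, so $W\in q$; if we also had $A\in q$, then $V'A\cap V'W\neq\emptyset$ by the defining property of near filters, a contradiction. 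Hence $A\notin q$, so $q\in N_A$, as claimed.

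The only step that is not a routine unwinding of definitions is the production of the nonempty open set $W$ on which $VA$ has empty trace — equivalently, the fact that $VA$ cannot be dense in $X$. This is precisely where part (2) of Lemma \ref{BasicProps} enters (a dense open set must belong to every near ultrafilter containing $X$), and it is the heart of the argument; once $W$ is in hand, everything else follows by applying the near filter condition to $V'$. I also note that minimality of $X$ is not used here beyond guaranteeing, via Proposition \ref{Minimal}, that $S_G(X)$ is itself minimal, so that the term ``highly proximal'' applies.
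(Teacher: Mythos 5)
Your proof is correct and follows essentially the same route as the paper: both verify that every nonempty basic open set $N_A$ contains an entire fiber $\pi_X^{-1}(\{x\})$ over a point $x$ lying in the interior of the complement of (a thickening of) $A$. The only difference is cosmetic: where the paper picks $x\in\mathrm{int}(X\setminus A)$ and uses continuity of the action to separate $A$ from a neighborhood of $x$, you first pass to $VA\notin p$ via Lemma \ref{BasicProps}(1) and use Lemma \ref{BasicProps}(2) to see that $W=\mathrm{int}(X\setminus VA)$ is nonempty and works uniformly --- which in fact spells out the step ($N_A\neq\emptyset$ forces the relevant interior to be nonempty) that the paper leaves implicit.
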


\begin{proof}
	By \ref{Minimal}, $S_G(X)$ is a minimal flow. So let $N_A\subseteq S_G(X)$ be a nonempty basic open neighborhood. This implies that $\mathrm{int}(X\setminus A)\neq\emptyset$. Let $x\in \mathrm{int}(X\setminus A)$. Then there are open $B\ni x$ and $U\in \mathcal{N}_G$ with $UB\cap A = \emptyset$. It follows that any $p\in S_G(X)$ containing $B$ cannot contain $A$. In particular, we have $\pi_X^{-1}(\{x\})\subseteq N_A$.
\end{proof}

\begin{theorem}
	\label{UnivHP}
	Let $X$ be minimal. Then the map $\pi_X: S_G(X)\rightarrow X$ is the universal highly proximal extension of $X$.
\end{theorem}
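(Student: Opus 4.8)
The plan is to verify the two defining properties of the universal highly proximal extension: first that $\pi_X \colon S_G(X) \to X$ is highly proximal, which is already established in Proposition \ref{HP}, and second the universality, namely that for any highly proximal $\phi \colon Y \to X$ there is a $G$-map $\psi \colon S_G(X) \to Y$ with $\pi_X = \phi \circ \psi$. So the real content is constructing $\psi$ from a given highly proximal extension $\phi \colon Y \to X$.

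To build $\psi$, I would fix $p \in S_G(X)$ and try to read off a point of $Y$ from it. The natural idea is to pull back the near ultrafilter $p$ along $\phi$: for an open set $B \subseteq Y$, one wants to say ``$B$ is in the pushforward'' when the fiber image $\phi_{fib}(B) \in p$, or perhaps when some slightly fattened version $\phi_{fib}(UB)$ lies in $p$ for every $U \in \mathcal{N}_G$. First I would check that the collection $\mathcal{G}_p := \{B \in \op{Y} : \phi_{fib}(UB) \in p \text{ for all } U \in \mathcal{N}_G\}$ (or the appropriate variant) is nonempty and has the finite-intersection-like property defining a near filter on $Y$ — here the key point is that highly proximality of $\phi$ guarantees $\phi_{fib}(B) \neq \emptyset$ for nonempty open $B$, and one needs to relate $\phi_{fib}$ of intersections and of $G$-translates to the near-filter condition. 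Extending $\mathcal{G}_p$ to a near ultrafilter $q$ on $Y$ gives a candidate point $\psi(p) \in S_G(Y)$; but since $Y$ is itself a compact flow one in fact wants a genuine point of $Y$, obtained as $\pi_Y(q) \in Y$, or directly as the unique point of $Y$ all of whose neighborhoods lie in $\mathcal{G}_p$. I would then check this point is well-defined (uniqueness via a separation argument as in the definition of $\pi_X$), that $\psi$ so defined is continuous, $G$-equivariant, and satisfies $\phi \circ \psi = \pi_X$ — the last identity should follow because the neighborhoods of $\pi_X(p)$ in $X$ are exactly pulled back appropriately.

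The main obstacle I anticipate is proving that $\psi$ is well-defined as a map into $Y$ (not merely into $S_G(Y)$): one must show that $\mathcal{G}_p$ determines a \emph{unique} point of $Y$, i.e. that there do not exist two points $y_1 \neq y_2 \in Y$ both of whose neighborhood filters are contained in (the near-filter generated by) $\mathcal{G}_p$. This is where highly proximality must be used in an essential way: if $y_1 \neq y_2$ lay in the same fiber $\phi^{-1}(\{x\})$, one needs to separate them by open sets $B_1 \ni y_1$, $B_2 \ni y_2$ and $W \in \mathcal{N}_G$ with $WB_1 \cap WB_2 = \emptyset$, and then argue that $\phi_{fib}(WB_1)$ and $\phi_{fib}(WB_2)$ cannot both lie in $p$ — which should come down to the fact that a fiber cannot be covered by $WB_1$ while simultaneously... this requires care, since $\phi_{fib}(WB_1) \cap \phi_{fib}(WB_2)$ could still be nonempty even though $WB_1 \cap WB_2 = \emptyset$ is impossible; rather one shows these two fiber images are ``far apart'' in $X$ in the $\mathcal{N}_G$-sense. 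Once uniqueness is in hand, continuity of $\psi$ is routine (checking preimages of basic closed sets $\pi_Y^{-1}$-style), equivariance is formal, and the factorization $\phi\circ\psi = \pi_X$ follows by comparing which neighborhoods of points are forced into $p$. I would also remark that uniqueness of $\psi$ up to isomorphism over $X$ is automatic from the universal property once existence is shown.
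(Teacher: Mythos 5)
Your overall architecture matches the paper's: quote Proposition \ref{HP} for high proximality, then build $\psi$ by deciding, from the fiber images of open sets, which point of $Y$ a given $p\in S_G(X)$ determines. But at the central step --- that such a point of $Y$ \emph{exists} and is \emph{unique} --- your proposal has a genuine gap, and that is where essentially all the work lies. For existence, the paper supposes every $y\in Y$ has a neighborhood $B_y$ with $\phi_{fib}(B_y)\notin p$, extracts a finite subcover $B_{y_1},\dots,B_{y_k}$ by compactness of $Y$, uses high proximality to show that $\bigcup_{i\leq k}\phi_{fib}(B_{y_i})$ is \emph{dense} in $X$, and then invokes Lemma \ref{BasicProps}(2) to force some $\phi_{fib}(B_{y_i})\in p$, a contradiction. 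Nothing in your sketch plays this role: if you define $\psi(p)$ ``directly as the unique point all of whose neighborhoods lie in $\mathcal{G}_p$,'' you have not shown any such point exists; and your substitute route --- extend $\mathcal{G}_p$ to a near ultrafilter $q$ on $Y$ and set $\psi(p)=\pi_Y(q)$ --- produces a candidate point only at the cost of an unresolved well-definedness problem, since different near ultrafilter extensions of $\mathcal{G}_p$ could a priori have different $\pi_Y$-images. Ruling that out is not implied by the uniqueness statement you propose to prove (no two points of $Y$ have their neighborhood filters inside the near filter generated by $\mathcal{G}_p$), because the neighborhoods of $\pi_Y(q)$ need only lie in $q$, not in that near filter; settling it amounts to proving the existence statement you skipped.

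Your treatment of uniqueness also stumbles at the decisive point. If $WB_1\cap WB_2=\emptyset$ in $Y$, then $\phi_{fib}(WB_1)\cap\phi_{fib}(WB_2)=\emptyset$ automatically: $\phi$ is a $G$-map between minimal flows, hence surjective, so fibers are nonempty, and a fiber contained in both sets would lie in their (empty) intersection. So the difficulty you flag in your last paragraph is not a difficulty at all --- that disjointness is the easy half. The ingredient you actually need, and never supply, is the inclusion $V\phi_{fib}(B)\subseteq\phi_{fib}(VB)$ (valid because $g\phi^{-1}(\{x\})=\phi^{-1}(\{gx\})$), which converts the disjointness of $\phi_{fib}(VB_1)$ and $\phi_{fib}(VB_2)$ into $V\phi_{fib}(B_1)\cap V\phi_{fib}(B_2)=\emptyset$, i.e.\ into a direct violation of the near filter property of $p$ if $p$ contained fiber images of neighborhoods of two distinct points. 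With that inclusion, uniqueness (and likewise the continuity check via the families $\mathcal{F}_K$ for closed $K\subseteq Y$, and the identity $\phi\circ\psi=\pi_X$) is short, exactly as in the paper; without it, ``far apart in the $\mathcal{N}_G$-sense'' remains a hope rather than an argument.
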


\begin{proof}
Fix a highly proximal extension $\phi: Y\rightarrow X$. For each $y\in Y$, let $\mathcal{F}_y := \{\phi_{fib}(B): B\ni y \text{ open}\}$. Then $\mathcal{F}_y\subseteq \op{X}$ is a filter of open sets, so in particular it is a near filter. We will show that for each $p\in S_G(X)$, there is a unique $y\in Y$ with $\mathcal{F}_y\subseteq p$. This will define the map $\psi: S_G(X)\rightarrow Y$.

We first show that for each $p\in S_G(X)$, there is at least one such $y\in Y$. To the contrary, suppose for each $y\in Y$, there were $B_y\ni y$ open so that $\phi_{fib}(B_y)\not\in p$. Find $y_1,...,y_k$ so that $\{B_{y_1},...,B_{y_k}\}$ is a finite subcover. Let $A_i = \phi_{fib}(B_{y_i})$. Each $A_i$ is open, so we will reach a contradiction once we show that $\bigcup_{i\leq k} A_i$ is dense. Let $A\subseteq X$ be open. Then $C := B_{y_i}\cap \phi^{-1}(A)\neq \emptyset$ for some $i\leq k$. As $C$ is open, $\phi_{fib}(C)\neq \emptyset$, and $\phi_{fib}(C)\subseteq A\cap A_i$.

Now we consider uniqueness. Let $p\in S_G(X)$, and consider $y\neq z\in Y$. Find open $B\ni y$ and $C\ni z$ and some $V\in \mathcal{N}_G$ so that $VB\cap VC = \emptyset$. It follows that $\phi_{fib}(VB)\cap \phi_{fib}(VC) = \emptyset$. Now notice that $V\phi_{fib}(B)\subseteq \phi_{fib}(VB)$, and likewise for $C$. Hence $p$ cannot contain both $\mathcal{F}_y$ and $\mathcal{F}_z$.

The map $\psi$ clearly respects the $G$-action and satisfies $\pi_X = \phi\circ \psi$. To show continuity, let $K\subseteq Y$ be closed. Let $\mathcal{F}_K := \{\phi_{fib}(B): B\supseteq K \text{ open}\}$. We will show that $\psi(p)\in K$ iff $\mathcal{F}_K\subseteq p$. From this it follows that $\psi^{-1}(K)$ is closed. One direction is clear. For the other, suppose $\psi(p) = y\not\in K$. Find open sets $B\ni y$, $C\supseteq K$, and $V\in \mathcal{N}_G$ with $VB\cap VC = \emptyset$. As in the proof of uniqueness, $p$ cannot contain both $\mathcal{F}_y$ and $\mathcal{F}_K$.
\end{proof}

By combining the main results of the previous two sections, we obtain the following.

\begin{cor}
	Let $G$ be a Polish group, and let $X$ be a minimal, metrizable $G$-flow with all orbits meager. Then the universal highly proximal extension of $X$ is non-metrizable.
\end{cor}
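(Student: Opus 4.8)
The plan is to assemble the corollary from the machinery already in place, so the proof is essentially a bookkeeping exercise. First I would recall that by Theorem~\ref{UnivHP}, since $X$ is minimal, the map $\pi_X \colon S_G(X) \to X$ \emph{is} the universal highly proximal extension of $X$ (up to $G$-flow isomorphism over $X$). So it suffices to show that the particular flow $S_G(X)$ is non-metrizable under the stated hypotheses on $G$ and $X$.

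Next I would invoke the hypotheses: $G$ is Polish, and $X$ is minimal, metrizable, with all orbits meager. These are exactly the hypotheses feeding into the proof of Theorem~\ref{MainTheorem} given above. That argument, via Theorem~\ref{RosendalThm}, produces $U \in \mathcal{N}_G$ and an open $B \subseteq X$ witnessing the failure of the combinatorial condition, then passes to $V \in \mathcal{N}_G$ with $VV \subseteq U$ and builds a sequence $\{A_n : n < \omega\} \subseteq \op{X}$ such that $\{VA_n : n < \omega\}$ is pairwise disjoint. By Proposition~\ref{NonMetrizable}, this forces $S_G(X)$ to be non-metrizable — indeed the proof of that proposition exhibits a continuous surjection from a closed subspace of $S_G(X)$ onto $\beta\omega$.

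Putting the two pieces together: $S_G(X)$ is non-metrizable, and $S_G(X)$ realizes the universal highly proximal extension of $X$, hence the universal highly proximal extension of $X$ is non-metrizable. Concretely:

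\begin{proof}
Since $G$ is Polish and $X$ is minimal and metrizable with all orbits meager, the proof of Theorem~\ref{MainTheorem} shows that $S_G(X)$ is non-metrizable. By Theorem~\ref{UnivHP}, the map $\pi_X \colon S_G(X) \to X$ is the universal highly proximal extension of $X$. As this extension is unique up to $G$-flow isomorphism over $X$, the universal highly proximal extension of $X$ is non-metrizable.
\end{proof}

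There is no real obstacle here — the corollary is a direct consequence of Theorems~\ref{MainTheorem} (more precisely, its proof, which establishes non-metrizability of $S_G(X)$ rather than merely of $M(G)$) and~\ref{UnivHP}. The only point requiring a word of care is that Theorem~\ref{MainTheorem} as \emph{stated} speaks about $M(G)$, whereas what its proof actually delivers is the non-metrizability of $S_G(X)$ itself; I would make sure the phrasing in the proof points to the proof of Theorem~\ref{MainTheorem} (or equivalently to Propositions~\ref{NonMetrizable} and the construction preceding it) rather than to its statement, so that the logical dependency is transparent.
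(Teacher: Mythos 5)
Your proposal is correct and matches the paper's intended argument: the paper obtains the corollary simply "by combining the main results of the previous two sections," i.e.\ the proof of Theorem~\ref{MainTheorem} (which in fact establishes non-metrizability of $S_G(X)$ via Proposition~\ref{NonMetrizable}) together with Theorem~\ref{UnivHP}. Your remark that one must cite the \emph{proof} of Theorem~\ref{MainTheorem} rather than its statement is exactly the right point of care, and nothing further is needed.
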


\end{document}